\newtheorem{theorem}{Theorem}
\newtheorem{remark}{Remark}
\numberwithin{equation}{section}
\title{How much we gain by surplus-dependent premiums
- asymptotic analysis of ruin probability}
\author[C. Constantinescu]{Corina Constantinescu}
\address{University of Liverpool, Liverpool, UK}
\email{C.Constantinescu@liverpool.ac.uk}
\author[Z. Palmowski]{Zbigniew Palmowski}
\address{Wroc\l aw University of Science and Technology, Poland}
\email{zbigniew.palmowski@pwr.edu.pl}
\author[J. Wang]{Jing Wang}
\address{University of Liverpool, Liverpool, UK}
\email{Jing.Wang3@liverpool.ac.uk}
\thanks{This work is partially supported by Polish National Science Centre Grant No. 2018/29/B/ST1/00756, 2019-2022}
\begin{document}

\begin{abstract}
In this paper, we build on the techniques developed in \citet{exactasymptotic},
to generate initial-boundary value problems
for ruin probabilities
of surplus-dependent premium risk processes, under a renewal case scenario, Erlang (2) claim arrivals, 
and an exponential claims scenario, Erlang (2) claim sizes.
Applying the approximation theory of solutions of linear
ordinary differential equations developed in \citet{Fed1993}, we derive the asymptotics of the ruin probabilities when the initial reserve tends to infinity.
When considering premiums that are {\it linearly} dependent on reserves, representing for instance returns on risk-free investments of the insurance capital, we firstly derive explicit formulas for the ruin probabilities, from which we can easily determine their
asymptotics, only to match the ones obtained for general premiums dependent on reserves. We compare them with the asymptotics of the equivalent ruin probabilities when the premium rate is fixed over time, to
measure the gain generated by this additional mechanism of binding the premium rates with the amount of reserve own by the insurance company.\vspace{3mm}

\noindent {\sc Keywords.} ruin probability $\star$ premiums dependent on reserves $\star$ risk process $\star$ Erlang distribution
\end{abstract}

\maketitle

\pagestyle{myheadings} \markboth{\sc C.\ Constantinescu--- Z.\ Palmowski  --- J.\ Wang} {\sc
Ruin probabilities and surplus-dependent premiums}

\section{Introduction}
Insurance companies maintain solvency via careful design of premiums rates. The premiums rates are primarily based on the claims history and carefully adjusted to evolving factors such as the number of customers and/or the returns from investments in the financial market.
Collective risk models, introduced by Lundberg and Cram\'er, describe the evolution of the surplus of an insurance business with constant premiums rate, for the simplicity of arguments. This model, a compound Poisson process with drift, is referred to in the actuarial mathematics literature as the Cram\'er-Lundberg model. However, in practical situations, risk models with surplus-dependent premiums capture better the dynamics of the surplus of an insurance company. \citet{dividend} advised for a lower premium for higher surplus level to improve competitiveness, whereas a higher premium is needed for lower surplus level to reduce the probability of ruin.  \\
\\
Among surplus-dependent premiums, risk models with {\it risky} investments have been widely analyzed (see e.g. \cite{paulsen1993,investment, frolova, act}).  See \cite{paulsen1,paulsen2} for surveys on the topic. The special case of risk models with {\it linearly} dependent premiums can be interpreted as models with {\it riskless} investments, since the volatility of return on investments or the proportion of the capital invested in the risky asset is zero. Under this scenario, exact expressions of the ruin probability are derived for compound Poisson risk models with interest on surplus and exponential-type upper bounds for renewal risk models with interest (see \cite{caidickson2002,caidickson2003}). \citet{chelansurplus} investigate risk models with surplus-dependent premiums with dividend strategies and interest earning as a special case.\\
\\

Throughout this paper, we build on the method developed in \citet{exactasymptotic} to  extend the derivation of ruin probabilities to surplus-dependent premiums risk models with {\it Erlang} distributions (claim sizes or interarrival times). Recall from  \citet{exactasymptotic}, the risk model with surplus-dependent premiums is described by
\begin{equation} \label{model}
U(t)=u+\int_{0}^{t}p(U(s))ds-\sum_{k=1}^{N(t)}X_k,
\end{equation}
where $U(t)$ denotes the surplus at time $t$, and  $p(\cdot)$ is the premium rate at time $t$, a positive function of the current surplus $U(t)$. When $p(.)$ is constant, this model reduces to the classical collective risk model, see \cite{aaruin}. As in classical collective risk theory, ruin defines the first time the surplus becomes negative. For $T_u$, the time of ruin, given by
\begin{equation*}
T_u=\inf\{t\geqslant 0|U(t)<0\},
\end{equation*}
the probability of ruin with initial value $u$ is defined as
\begin{equation*}
\psi(u)=\mathbb{P}\left\{T_u<\infty|U(0)=u\right\}.
\end{equation*}

We focus on calculating ruin probabilities under Erlang claims and arrivals. Previously, \citet{willmotinterclaim} considered mixed Erlang claim size class when examining various properties associated with renewal risk processes with constant premium rates. Furthermore, \citet{willmotmixture} applied Erlang mixture to the claim size distribution when discussing the application of ruin-theoretic quantities. Various studies of ruin probabilities focus on risk model with interclaim times being Erlang($n$) distributed (see \cite{ligarridoerlangn,gerbershiumodel,lidicksonerlangn}) and Erlang(2) distributed (see \cite{dicksonhipperlang2,tsaisunerlang2,finiteerlang2}). \\

We use an algebraic approach to derive the equations satisfied by  the ruin probabilities, similar to the one from \citet{algebraic}, and further perform an asymptotic analysis of their solutions. We even solve them explicitely in a few instances. For perspective,  \citet{algebraic} introduced an algebraic approach to study the Gerber-Shiu function, and derived a linear ordinary differential equation (ODE) with constant coefficients for claims distribution with rational Laplace transform. Later in 2013, they extended this approach to an ODE with variable coefficients for surplus-dependent premiums risk models. Using method based on boundary value problems and Green's operators, they derived the explicit form of the ruin probability in the classical model with exponential claim sizes. \citet{exactasymptotic} extended the method to surplus-dependent premium models with exponential arrivals, for which they derived exact and asymptotic results for a few premium functions, when the claims were exponentially distributed. Here we extend to renewal models and Erlang claims.\\

The novelty of the paper consists on the explicit {\it asymptotic analysis} performed for reserve dependent premium with
Erlang distributed generic claim sizes or Erlang distributed generic interarrival times.
We separate the analysis between $p(\infty)=c$ and $p(\infty)=\infty$ and use the approximation theory of solutions of linear ordinary differential equations developed in \citet{Fed1993} to conclude the asymptotics of the ruin probabilities when initial reserves tend to infinity.\\

Among the premium functions exploding at infinity, i.e. $p(\infty)=\infty$, we consider the linear premium $p(u)=c+\varepsilon u$, in which $\varepsilon$ can be interpreted as the interest rate on the available surplus. {\it linear} premiums can be interpreted as investment of the company in bonds or risk-free assets.
When considering premiums that are {\it linearly} dependent on reserves,  we firstly derive {\it explicit formulas} for the ruin probabilities, using confluent geometric functions and their corresponding ODEs.
From these exact expressions we can easily determine their
asymptotics, only to match the ones obtained for general premiums dependent on reserves.\\

We show that when the investments are made on risk-free assets only, as bonds or treasury bills, the solvency is improved. We will look at the improvements on solvency when such investments are made, by analyzing the insurance risk models with or without investment returns, for claims and claim arrivals that are exponential or Erlang distributed. We compare them with the asymptotics of the equivalent ruin probabilities when the premium rate is fixed over time, to
measure the gain generated by this additional mechanism of binding the premium rates with the amount of reserve own by the insurance company.\\

In this paper we consider the three cases
\begin{itemize}
	\item[] (\lowercase\expandafter{\romannumeral1}) Exp$(\lambda)$ distributed interarrival times with Exp$(\mu)$ distributed claims sizes,
	\item[] (\lowercase\expandafter{\romannumeral2}) Erlang$(2,\lambda)$ distributed interarrival times with Exp$(\mu)$ distributed claims sizes,
	\item[] (\lowercase\expandafter{\romannumeral3}) Exp$(\lambda)$ distributed interarrival times with Erlang$(2,\mu)$ distributed claims sizes.
\end{itemize}
We consider two cases of premium functions:
\begin{enumerate}
\item[P1.] the premium function behaves like a constant at infinity
\begin{equation}\label{pierwszyprzypadek}
p(\infty)=c,\qquad p^\prime(u)= O\left(\frac{1}{u^2}\right);
\end{equation}
for $c>0$ or

\item[P2.] the premium function explodes at infinity, $p(\infty)=\infty$ as
\begin{equation}\label{drugiprzypadek}
p(u)=c+\sum_{i=1}^l\epsilon_i u^i,\qquad \epsilon_i, c>0.
\end{equation}
\end{enumerate}
The first case is satisfied by the rational and exponential premium functions.
The second case is satisfied by the linear and quadratic premium functions.\\

The paper is organized as folows. In Section 2, we introduce the Gerber-Shiu function and present the derivation of the boundary value problem for them in models with premium dependent on reserves and times and claims from distributions with rational Laplace transforms. We recall the results for ruin probabilities, in models with premiums dependent on reserves, general and linear premiums, when both inter-arrivals and claim sizes are exponentially distributed. In Sections 3 and 4, we perform the asymptotic analysis for the ruin probabilities for exponential and Erlang(2) distributed claim sizes and interarrival times, alternatively, for models with premiums dependent on reserves. In each section, for linear premiums, the exact ruin probabilities are derived and the asymptotics confirmed with those obtained for {\it general} premiums.  Section 5 is dedicated to comparing the asymtotic results, highlighting the gain generated, as in higher solvency, when dynamically adjusting the premium rates to surplus. Conclusions are given in Section 6.

\section{Ruin probabilities - method}
\label{method}

Ruin probability is sometimes seen as a particular case of the Gerber-Shiu function $\Phi(u)$ defined in \citet{gerbershiutime}. $\Phi(u)$ is given by
\begin{equation}
\label{gs}
\Phi(u)=\mathbb{E}[e^{-\delta T_u}\omega(U(T_u^-),|U(T_u))\mathbf{1}_{T_u<\infty}|U(0)=u].
\end{equation}
where $e^{-\delta T_{u}}$ is the discount factor, $\omega$ is the penalty function of surplus before ruin $U(T_{u}^{-})$ and deficit at ruin $U(T_{u})$. Thus, the ruin probability $\psi(u)$ is a special case of Gerber-Shiu function when $\delta =0$ and $\omega=1$.\\

Assuming that the distribution of the interclaim times $(\tau_k)_{k \geqslant 0}$ and the claim sizes $(X_k)_{k\geqslant0}$ have rational Laplace transform, the density functions $f_\tau(t)$, $f_X(x)$ satisfy linear ordinary differential equation
\begin{equation*}
\mathcal{L}_\tau\left(\frac{d}{dt}\right)f_\tau(t)=0,\quad \mathcal{L}_X\left(\frac{d}{dy}\right)f_X(y)=0,
\end{equation*}
with initial conditions
\begin{align*}
&{f_\tau}^{(k)}(0)=0 \quad(k=0,1,\ldots,n-2),\quad{f_\tau}^{(n-1)}(0)=\alpha_0,\\
&{f_X}^{(k)}(0)=0 \quad(k=0,1,\ldots,m-2),\quad{f_X}^{(m-1)}(0)=\beta_0,
\end{align*}
where
\begin{align*}
&\mathcal{L}_\tau\left(\frac{d}{dt}\right)=\left(\frac{d}{dt}\right)^n+\alpha_{n-1}\left(\frac{d}{dt}\right)^{n-1}+\cdots+\alpha_0,\\
&\mathcal{L}_X\left(\frac{d}{dx}\right)=\left(\frac{d}{dx}\right)^n+\beta_{n-1}\left(\frac{d}{dx}\right)^{n-1}+\cdots+\beta_0.
\end{align*}

For the risk models with surplus-dependent premiums, \citet{exactasymptotic} derived a compact integro-differential equation for $\Phi(u)$
\begin{equation}
\label{idesd}
\mathcal{L}_\tau\left(\delta-p(u)\frac{d}{du}\right)\Phi(u)=\alpha_0\left(\int_{0}^{u}\Phi(u-y)dF_X(y)+\omega(u)\right),
\end{equation}
where $\omega(x)=\int_{x}^{\infty}\omega(x,y-x)dF_X(y)$.\\
\\

For a Gerber-Shiu function, the coefficients of ODE are variables (non-constant), and the boundary value problem developed by \citep{exactasymptotic} is
\begin{equation}
\label{lodesd}
\mathcal{L}_X\left(\frac{d}{du}\right)\mathcal{L}_\tau\left(\delta-p(u)\frac{d}{du}\right)\Phi(u)=\alpha_0\beta_0\Phi(u)+\alpha_0\mathcal{L}_X
\left(\frac{d}{du}\right)\omega(u),
\end{equation}
exhibiting one regularity condition
\begin{equation*}
\Phi(\infty)=0
\end{equation*}
and $m$ initial conditions
\begin{equation*}
\Phi^{(k)}(0)=0\quad(k=0,\ldots,m-1).
\end{equation*}
\\
The general solution of this boundary value problem has the form
\begin{equation*}
\Phi(u)=\gamma_1s_1(u)+\cdots+\gamma_ms_m(u)+Gg(u),
\end{equation*}
where $s_i(u)$, $i=1,\ldots,m$ are $m$ stable solutions ($s_i(u)\rightarrow0$ as $u\rightarrow\infty$), $\gamma_i$ are constants determined by initial conditions, $g(u)=\alpha_0\mathcal{L}_X(\frac{d}{du})\omega(u)$, and $Gg(u)$ is the Green's operator for (\ref{lodesd}) (see \cite{exactasymptotic}).\\
\\
Again, the probability of ruin $\psi(u)$ is a special case of $\Phi(u)$ for $\delta=0$ and $\omega=1$. Thus one has
\begin{equation*}
\psi(u)=\gamma_1s_1(u)+\cdots+\gamma_ms_m(u).
\end{equation*}

In next sections we developed above theory to analyse the case when
either generic interarrival time or generic claim size has Gamma (Erlang) distribution. We start
for more easy case when both, generic claim and generic interarrival time, have exponential distributions.\\

For a classical compound Poisson process with exponential claims, the following explicit and asymptotic results for ruin probability $\psi(u)$ can be found in \cite{aaruin,exactasymptotic}.\\

{\bfseries{General premium.}}
For a classical compound Poisson process with exponential claims, the ruin probability $\psi(u)$ has the following explicit expression
\begin{equation}
\label{ar}
\psi(u)=\frac{\lambda\int_{u}^{\infty}e^{-\mu v+\int_{0}^{v}\frac{\lambda}{p(y)}dy}\frac{1}{p(v)}dv}{1+\lambda\int_{0}^{\infty}e^{-\mu v+\int_{0}^{v}\frac{\lambda}{p(y)}dy}\frac{1}{p(v)}dv}.
\end{equation}
\\
The asymptotic estimate of ruin probability for $p(\infty)=c$ is
\begin{equation*}
\psi(u)\sim\frac{\mu}{\lambda}Ce^{-\mu u+\lambda\int_0^u\frac{dw}{p(w)}},\quad u\rightarrow\infty
\end{equation*}
and for $p(\infty)=\infty$ is
\begin{equation*}
\psi(u)\sim\frac{\mu}{\lambda}C\frac{1}{p(u)} e^{-\mu u+\lambda\int_0^u\frac{dw}{p(w)}},\quad u\rightarrow\infty
\end{equation*}
where $C$ is a constant. We write $f(u)\sim g(u)$ for some functions $f$ and $g$ when $\lim_{u\to+\infty} f(u)/g(u)=1$.\\

{\bfseries{Linear premium.}}
The explicit form of ruin probability $\psi(u)$ is
\begin{equation}
\label{l1}
\psi_{l,1}(u)=\frac{\lambda\varepsilon^{\lambda/\varepsilon-1}}{\mu^{\lambda/\varepsilon}c^{\lambda/\varepsilon}e^{-\mu c/\varepsilon}+\lambda\varepsilon^{\lambda/\varepsilon-1}\Gamma(\frac{\mu c}{\varepsilon},\frac{\lambda}{\varepsilon})}\Gamma(\frac{\mu(c+\varepsilon u)}{\varepsilon},\frac{\lambda}{\varepsilon}),
\end{equation}
where $\Gamma(x,\eta)$ is the incomplete gamma function defined as
\begin{equation*}
\Gamma(x,\eta)=\int_{x}^{\infty}t^{\eta-1}e^{-t}dt.
\end{equation*}

Moreover, when $p(u)=c+\varepsilon u$, we have
\begin{equation}
\label{asymptotic1}
\psi(u)\sim\frac{\mu}{\lambda c^{\lambda/\varepsilon}}Ce^{-\mu u}(c+\varepsilon u)^{\frac{\lambda}{\varepsilon}-1},\quad \text{as }u\rightarrow\infty.
\end{equation}

\section{Erlang$(2,\lambda)$ distributed interarrival times with Exp$(\mu)$ distributed claims sizes,}
\label{erlangexp}
Let the claim sizes $(X_k)_{k\geqslant 0}$ be exponentially distributed and interarrival times $(\tau_k)_{k\geqslant 0}$ be Erlang$(2,\lambda)$ distributed, that is their density functions are
\begin{equation*}
f_X(x)=e^{-\mu x},\ x\geqslant0 \quad\text{and}\quad f_\tau(t)=\lambda^2te^{-\lambda t},\ t\geqslant0.
\end{equation*}
We denote by  $\psi_{l,2}(u)$ and $\Phi_{l,2}(u)$  the ruin probability and Gerber-Shiu function in this case.

\subsection{General premium}
Based on the technique as in \citet{algebraic,exactasymptotic}, the boundary value problem (\ref{lodesd}) becomes
\begin{equation*}
\left[\left(\frac{d}{du}+\mu\right)\left(\delta-p(u)\frac{d}{du}+\lambda\right)^2-\lambda^2\mu\right]\Phi_{l,2}(u)=Gg(u),\quad u\geqslant0.
\end{equation*}
\\
For the special case $\delta=0$ and $\omega=1$, $g(u)=0$, the ODE of the ruin probability $\psi_{l,2}(u)$ has the form
\begin{equation}
\label{er}
\left[\left(\frac{d}{du}+\mu\right)\left(-p(u)\frac{d}{du}+\lambda\right)^2-\lambda^2\mu\right]\psi(u)=0,\quad u\geqslant0
\end{equation}
and
\begin{equation*}
\psi_{l,2}(u)=\gamma_{21}s_{21}(u),
\end{equation*}
where $s_{21}(u)$ is a stable solution and $\gamma_{21}$ is a constant  to be determined by the initial conditions.\\
\\
Expanding ODE (\ref{er}) leads to
\begin{align*}
p^2(u)\psi_{l,2}'''(u)+(2p'(u)p(u)-2\lambda p(u)+\mu p^2(u))\psi_{l,2}''(u)+(\lambda^2-2\lambda p'(u)-2\lambda\mu p(u))\psi_{l,2}'(u)=0.
\end{align*}\\
This is a third-order ODE with variable coefficients. 
Considering the third-order as second-order ODE in $h_{l,2}(u)=\psi_{l,2}'(u)$, one has
\begin{align}
\label{e2}
p^2(u)h_{l,2}''(u)+(2p'(u)p(u)-2\lambda p(u)+\mu p^2(u))h_{l,2}'(u)+(\lambda^2-2\lambda p'(u)-2\lambda\mu p(u))h_{l,2}(u)=0.
\end{align}\\

In order to perform the asymptotic analysis as in \citep[p. 250]{Fed1993},
we consider the characteristic equation of \eqref{e2} when $p(u)=c$.
Let
$\hat{\rho}_1$ and $\hat{\rho}_2$ be solutions of the square equation
$$\rho^2+\frac{-2\lambda c+\mu c^2}{c^2}\rho+\frac{\lambda^2-2\lambda\mu c}{c^2}=0,$$
that is,
\begin{equation}\label{rhos}
\hat{\rho}_{i}=\frac{2\lambda c-\mu c^2\pm\sqrt{(2\lambda c-\mu c^2)^2+4\lambda c^2(2\mu c-\lambda)}}{2c^2}.
\end{equation}
Moreover, let
\begin{equation}\label{rho1}
\rho_1(u)
=\frac{1}{2}\left(-q_1(u)-\sqrt{q_1^2(u)-4q_0(u)}\right)
\end{equation}
and
\begin{equation}\label{rho1}
\rho_2(u)
=\frac{1}{2}\left(-q_1(u)+\sqrt{q_1^2(u)-4q_0(u)}\right)
\end{equation}
be solutions of the characteristic equation
$\rho^2+q_1(u)\rho+q_0(u)=0$, where
$$q_1(u) =\frac{2p'(u)p(u)-2\lambda p(u)+\mu p^2(u)}{p^2(u)}$$
and
$$q_0(u)=\frac{\lambda^2-2\lambda p'(u)-2\lambda\mu p(u)}{p^2(u)}.$$
Further, as in \citep{Fed1993}, denote
\begin{equation}\label{rhoone}
\rho_i^{(1)}(u)=\frac{-\rho_i'(u)}{2 \rho_i(u)+q_1(u)},\qquad i=1,2.\end{equation}

\begin{theorem}\label{thm:1}
Let $C_i$ ($i=1,2,3$) be some constants. If  \eqref{pierwszyprzypadek} holds
with
\begin{align}\label{safeload1}
\frac{2c}{\lambda}>\frac{1}{\mu},
\end{align}
then
\begin{equation}\label{firstas2}
\psi_{l,2}(u)\sim -\frac{C_1}{\hat{\rho}_1}e^{\hat{\rho}_1 u}
\end{equation}
where $\hat{\rho}_1<0$.
%
If (\ref{drugiprzypadek}) holds then
\begin{equation}\label{secondtasb}
\psi_{l,2}(u)\sim C_3\int_u^\infty \exp\left\{\int_0^y (\rho_1(z)+ \rho_1^{(1)}(z)dz\right\}dy.
\end{equation}
\end{theorem}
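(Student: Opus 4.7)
\textbf{Plan of proof for Theorem \ref{thm:1}.} The plan is to invoke the asymptotic theory of \citet{Fed1993} for the second-order ODE \eqref{e2} satisfied by $h_{l,2}=\psi_{l,2}'$, determine the asymptotic behaviour of a suitably chosen ``stable'' solution of \eqref{e2}, and then recover the asymptotics of $\psi_{l,2}$ by integrating once, using $\psi_{l,2}(\infty)=0$ to write
\begin{equation*}
\psi_{l,2}(u)= -\int_u^\infty h_{l,2}(s)\,ds.
\end{equation*}
The three-dimensional solution space of the underlying third-order ODE decomposes into the one-dimensional constant subspace (which is eliminated by $\psi_{l,2}(\infty)=0$) and the two-dimensional space generated by antiderivatives of solutions of \eqref{e2}; in each regime only the genuinely decaying branch must be retained, matched to the initial conditions via the constants $C_i$.

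For case (P1), with $p(\infty)=c$ and $p'(u)=O(1/u^{2})$, the coefficients $q_0(u),q_1(u)$ of \eqref{e2} converge to constants at rate $O(1/u^{2})$. I would verify the hypotheses of Fedoryuk's Levinson-type theorem (integrability of the perturbation, bounded variation of the coefficients, separation of the limiting characteristic roots), which then yields a fundamental system of solutions of \eqref{e2} that are asymptotic to $e^{\hat{\rho}_i u}$, $i=1,2$, where $\hat{\rho}_i$ are the roots \eqref{rhos} of the frozen-coefficient characteristic polynomial. The safety loading \eqref{safeload1}, i.e. $2\mu c>\lambda$, makes the product of roots $(\lambda^2-2\lambda\mu c)/c^{2}$ negative, so the two roots have opposite signs; selecting the decaying one yields $\hat{\rho}_1<0$ and the stable solution $h_{l,2}(u)\sim C_1 e^{\hat{\rho}_1 u}$. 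Integrating from $u$ to $\infty$ produces $\psi_{l,2}(u)\sim -(C_1/\hat{\rho}_1)\,e^{\hat{\rho}_1 u}$, which is \eqref{firstas2}.

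For case (P2), with $p(u)=c+\sum_{i=1}^l \epsilon_i u^{i}$, the coefficient $q_1(u)$ grows, so the problem is a genuine WKB/Liouville--Green situation rather than a perturbation of a constant-coefficient equation. I would apply the approximation scheme of \citep{Fed1993} that associates to the characteristic equation $\rho^{2}+q_{1}(u)\rho+q_{0}(u)=0$ a fundamental system of \eqref{e2} of the form
\begin{equation*}
h_{l,2}^{(i)}(u)\sim \exp\!\left\{\int_0^u \bigl(\rho_i(z)+\rho_i^{(1)}(z)\bigr)\,dz\right\},\qquad i=1,2,
\end{equation*}
with $\rho_i,\rho_i^{(1)}$ as in \eqref{rho1}--\eqref{rhoone}; to justify this one checks the non-degeneracy condition $q_1^{2}-4q_0>0$ and the smoothness/growth estimates on $\rho_i, \rho_i'$ required by Fedoryuk's theorem, which hold since $p(u)$ is polynomial. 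The decaying branch corresponds to $\rho_1(u)$, giving $h_{l,2}(u)\sim -C_3 \exp\{\int_0^u(\rho_1+\rho_1^{(1)})\,dz\}$, and a single integration from $u$ to $\infty$, combined with the boundary condition $\psi_{l,2}(\infty)=0$, yields \eqref{secondtasb}.

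The main obstacle will be the careful verification that the hypotheses of the Fedoryuk approximation theorem hold for the specific coefficients $q_0,q_1$ built from $p(u)$ in both regimes, and the clean identification of the ``stable'' solution in a three-dimensional solution space (the presence of the spurious constant solution of the third-order equation must be excluded via the regularity condition at infinity). A secondary subtlety is showing that integrating the asymptotic equivalence for $h_{l,2}$ preserves $\sim$; this follows in (P1) because $\hat{\rho}_1<0$ and in (P2) because $\rho_1(z)\to-\infty$ makes the integrand of \eqref{secondtasb} rapidly decaying, so a dominated-convergence/Watson-lemma argument transfers the equivalence through the integral.
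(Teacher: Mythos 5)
Your overall strategy coincides with the paper's: pass to the second-order equation \eqref{e2} for $h_{l,2}=\psi_{l,2}'$, apply Fedoryuk's asymptotic theory to obtain a fundamental system, keep the stable branch, and integrate back using $\psi_{l,2}(\infty)=0$. Case (P1) is handled exactly as in the paper: coefficients converging at rate $O(1/u^2)$, limiting roots of opposite sign under \eqref{safeload1}, selection of $\hat{\rho}_1<0$.

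In case (P2), however, your justification for invoking the WKB-type result rests on a miscomputation. For $p(u)=c+\sum_{i=1}^l\epsilon_i u^i$ one has $q_1(u)=2p'(u)/p(u)-2\lambda/p(u)+\mu\to\mu$; it does not grow. The genuine obstruction is that $q_0(u)\sim -\tfrac{2\lambda\mu}{\epsilon_l}u^{-l}\to 0$, so the limiting characteristic roots are $-\mu$ and $0$: they fail to separate, the Levinson-type conditions of \citet[p.~250]{Fed1993} used in (P1) do not apply, and one must instead verify Conditions 1), 2') and (19) of \citet[p.~254]{Fed1993}, which is what the paper does. Likewise $\rho_1(z)\to-\mu$, not $-\infty$; the integrand of \eqref{secondtasb} decays essentially like $e^{-\mu y}$, which is still enough to transfer the asymptotic equivalence through the integral. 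Finally, you assert that only the $\rho_1$-branch is retained but do not say why the $\rho_2$-branch drops out; since $\rho_2(z)=O(z^{-l})$, the corresponding candidate $\int_u^\infty \exp\bigl\{\int_0^y (\rho_2(z)+\rho_2^{(1)}(z))dz\bigr\}dy$ diverges and so cannot contribute to a solution vanishing at infinity. This last step is what explains why \eqref{secondtasb} contains a single term here while the analogous asymptotics in Theorem \ref{thm:2} retains two, so it should be made explicit rather than subsumed under ``keep the decaying branch.''
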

\begin{remark}\rm
Under complimentary to \eqref{safeload1} assumption
\begin{align*}\label{safeload2}
\frac{2c}{\lambda}<\frac{1}{\mu}
\end{align*}
we have $\hat{\rho}_{1,2}>0$ and hence both asymptotic special solutions are unstable.
Their difference might still tend to zero but
\citet{Fed1993} theory is not sufficient precise to recover
the finer asymptotics in this case.
\end{remark}
\begin{proof}
Note that in the case of premium function  \eqref{pierwszyprzypadek} we have
$$q_1(u)= \frac{-2\lambda c+\mu c^2}{c^2} + O\left(\frac{1}{u^2}\right)$$
and
$$q_0(u)=\frac{\lambda^2-2\lambda\mu c}{c^2}+ O\left(\frac{1}{u^2}\right).$$
Further,  under assumption \eqref{safeload1}, $\hat{\rho}_1<0$ and $\hat{\rho}_2>0$ for $\hat{\rho}_{1,2}$ defined in \eqref{rhos}.
Then Conditions 1) and 2) of \citet[p. 250]{Fed1993} are satisfied
and hence choosing the stable solution (tending to zero as $u$ tends to infinity) we have
\begin{equation}\label{firstas}
h_{l,2}(u)\sim e^{\hat{\rho}_1 u}
\end{equation}
if \eqref{safeload1} is satisfied.
Thus asymptotics \eqref{firstas2} holds true.

In the second case of premium function (\ref{drugiprzypadek}) observe that the
solutions of the characteristic equation
$\rho^2+q_1(u)\rho+q_0(u)=0$ satisfy:
\begin{equation}\label{rho1}
\rho_1(u)
=\frac{1}{2}\left(-q_1(u)-\sqrt{q_1^2(u)-4q_0(u)}\right)\sim -\mu -\frac{q_0(u)}{\mu}\sim -\mu-\frac{2\lambda}{\epsilon_l} u^{-l}
\end{equation}
and
\begin{equation}\label{rho1b}
\rho_2(u)
=\frac{1}{2}\left(-q_1(u)+\sqrt{q_1^2(u)-4q_0(u)}\right)\sim -\frac{2\lambda}{\epsilon_l} u^{-l}.
\end{equation}
Moreover, in this case $q_0(u)\sim \frac{-2\lambda \mu}{\epsilon_l}u^{-l}$ and
$q_1(u)\sim \mu$.
Thus Conditions 1), 2') and (19) of \citet[p. 254]{Fed1993} are satisfied
and
we can conclude \eqref{secondtasb}. Note that from \eqref{rho1b}
$$\int_u^\infty \exp\left\{\int_0^y (\rho_2(z)+ \rho_2^{(1)}(z)dz\right\}dy$$
tends to infinity. Hence by \eqref{rho1}
only
$$\int_u^\infty \exp\left\{\int_0^y (\rho_1(z)+ \rho_1^{(1)}(z)dz\right\}dy$$
can produce the stable asymptotics \eqref{secondtasb}
in a sense that it tends to zero as $u$ tends to infinity.
\end{proof}

Observe that indeed in all considered cases $\psi_{l,2}(u)\rightarrow 0$ as $u\rightarrow+\infty$, that is, we choose
the asymptotics of stable solutions.

\subsection{Linear premium}
Now we perform the asymptotic analysis of the special case of linear premium rate which corresponds to investments of reserves into bonds
with interest rate $ \varepsilon>0$.
Substituting $p(u)=c+\varepsilon u$ into ODE (\ref{e2}), we have
\begin{align}
\label{e2l}
\nonumber&(c+\varepsilon u)^2h_{l,2}''(u)+(2\varepsilon(c+\varepsilon u)-2\lambda(c+\varepsilon u)+\mu (c+\varepsilon u)^2)h_{l,2}'(u)\\
&+(\lambda^2-2\lambda\varepsilon-2\lambda\mu(c+\varepsilon u))h_{l,2}(u)=0.
\end{align}

Before we solve this equation and perform the asymptotic analysis we will
show how the asymptotics of $\psi_{l,2}$ can be derived from Theorem \ref{thm:1}.
In this case, we have
\begin{equation*}
q_1(u) =\frac{2\varepsilon-2\lambda}{c+\varepsilon u}+\mu \ {\rm and}\ q_0(u)=\frac{\lambda^2-2\lambda\varepsilon}{(c+\varepsilon u)^2}-\frac{2\lambda\mu}{c+\varepsilon u}.
\end{equation*}
Further, the discriminant is
$$q_1^2(u)-4q_0(u)=\frac{4\varepsilon^2}{(c+\varepsilon u)^2}+\frac{4\mu\varepsilon+4\lambda\mu}{c+\varepsilon u}+\mu^2$$
and therefore
\begin{align*}
\rho_1(u)&=\frac{1}{2}\left(-q_1(u)-\sqrt{q_1^2(u)-4q_0(u)}\right)\\
&=-\frac{1}{2}\mu-\frac{\varepsilon-\lambda}{c+\varepsilon u}-\frac{1}{2}\sqrt{\mu^2+\frac{4\mu(\varepsilon+\lambda)}{c+\varepsilon u}+\frac{4\varepsilon^2}{(c+\varepsilon u)^2}}.
\end{align*}
Applying Taylor expansion, we can conclude that
\begin{equation*}
\sqrt{q_1^2(u)-4q_0(u)}\sim\mu+\frac{2(\varepsilon+\lambda)}{c+\varepsilon u},\quad\text{as } u\rightarrow\infty.
\end{equation*}
Additionally, observe that
\begin{equation*}
q'_1(u)=-\frac{\varepsilon(2\varepsilon-2\lambda)}{(c+\varepsilon u)^2}\;{\rm and}\; \left(\sqrt{q_1^2(u)-4q_0(u)}\right)'=\frac{-\frac{4\varepsilon^3}{(c+\varepsilon u)^3}-\frac{\varepsilon(2\varepsilon\mu+2\lambda\mu)}{(c+\varepsilon u)^2}}{\sqrt{\frac{4\varepsilon^2}{(c+\varepsilon u)^2}+\frac{4\mu\varepsilon+4\lambda\mu}{c+\varepsilon u}+\mu^2}}.
\end{equation*}
This gives that
\begin{align*}
\frac{\rho'_1(u)}{\sqrt{q_1^2(u)-4q_0(u)}}&=\frac{\frac{1}{2}\left(-q_1(u)-\sqrt{q_1^2(u)-4q_0(u)}\right)'}{\sqrt{\frac{4\varepsilon^2}{(c+\varepsilon u)^2}+\frac{4\mu\varepsilon+4\lambda\mu}{c+\varepsilon u}+\mu^2}}\\
&=\frac{\frac{\varepsilon(\varepsilon-\lambda)}{(c+\varepsilon u)^2}}{\sqrt{\frac{4\varepsilon^2}{(c+\varepsilon u)^2}+\frac{4\mu\varepsilon+4\lambda\mu}{c+\varepsilon u}+\mu^2}}+\frac{\frac{2\varepsilon^3}{(c+\varepsilon u)^3}+\frac{\varepsilon(\varepsilon\mu+\lambda\mu)}{(c+\varepsilon u)^2}}{\frac{4\varepsilon^2}{(c+\varepsilon u)^2}+\frac{4\mu\varepsilon+4\lambda\mu}{c+\varepsilon u}+\mu^2}.
\end{align*}
Using \eqref{rhoone} we finally derive
\begin{align*}
\rho_1(u)+ \rho_1^{(1)}(u)=\rho_1(u)+\frac{\rho'_1(u)}{\sqrt{q_1^2(u)-4q_0(u)}}\sim-\mu-\frac{2\varepsilon}{c+\varepsilon u},\quad\text{as } u\rightarrow\infty.
\end{align*}
Thus, for $u\rightarrow\infty$,
\begin{align*}
\exp\left\{\int_0^y (\rho_1(z)+ \rho_1^{(1)}(z)dz\right\}=e^{-\mu y}\left(\frac{c+\varepsilon y}{c}\right)^{-2}
\end{align*}
and by \eqref{secondtasb}
\begin{equation}\label{aslinearjeden}
\psi_{l,2}(u)\sim C_3\int_u^\infty e^{-\mu y}\left(\frac{c+\varepsilon y}{c}\right)^{-2} dy
\end{equation}
for some constant $C_3$.
The same asymptotics can be derived by solving \eqref{e2l} explicitly.
Note that \eqref{e2l} is the general confluent equation 13.1.35 in \citet[p. 505]{ashandbook}, which has the form
\begin{align*}
w''(z)&+\left[\frac{2A}{Z}+2f'(z)+\frac{bh'(z)}{h(z)}-h'(z)-\frac{h''(z)}{h'(z)}\right]w'(z)\\
+&\left[\left(\frac{bh'(z)}{h(z)}-h'(z)-\frac{h''(z)}{h'(z)}\right)\left(\frac{A}{Z}+f'(z)\right)+\frac{A(A-1)}{Z^2}+\frac{2Af'(z)}{Z}+f''(z)+f'^2(z)-\frac{ah'^2(z)}{h(z)}\right]w(z)=0.
\end{align*}

For our ODE \eqref{e2l}, let
\begin{align*}
&Z=\frac{c+\varepsilon u}{\varepsilon},\;f(Z)=h(Z)=\mu Z,\\
&A=\frac{1}{2}-\frac{\lambda}{\varepsilon}-\frac{1}{2}\sqrt{1+\frac{4\lambda}{\varepsilon}},\;a=1+\frac{\varepsilon+2\lambda+\varepsilon\sqrt{1+\frac{4\lambda}{\varepsilon}}}{2\varepsilon},\;b=1+\sqrt{1+\frac{4\lambda}{\varepsilon}},
\end{align*}
the corresponding solutions are
\begin{align*}
h_{l,21}(u)=&C_{21}e^{-\mu u}(c+\varepsilon u)^{-\frac{1}{2}+\frac{\lambda}{\varepsilon}+\frac{1}{2}\sqrt{1+\frac{4\lambda}{\varepsilon}}}\\
&\cdot {\rm M}\left(1+\frac{\varepsilon+2\lambda+\varepsilon\sqrt{1+\frac{4\lambda}{\varepsilon}}}{2\varepsilon},1+\sqrt{1+\frac{4\lambda}{\varepsilon}},\frac{\mu(c+\varepsilon u)}{\varepsilon}\right),
\end{align*}
and
\begin{align*}
h_{l,22}(u)=&C_{22}e^{-\mu u}(c+\varepsilon u)^{-\frac{1}{2}+\frac{\lambda}{\varepsilon}+\frac{1}{2}\sqrt{1+\frac{4\lambda}{\varepsilon}}}\\
&\cdot {\rm U}\left(1+\frac{\varepsilon+2\lambda+\varepsilon\sqrt{1+\frac{4\lambda}{\varepsilon}}}{2\varepsilon},1+\sqrt{1+\frac{4\lambda}{\varepsilon}},\frac{\mu(c+\varepsilon u)}{\varepsilon}\right),
\end{align*}
where $C_{21}$ and $C_{22}$ are constants.\\

From p. 504 of \cite{ashandbook} we know that
\begin{equation*}
{\rm M}(a,b,z)\sim \frac{\Gamma(b)}{\Gamma(a)} e^z z^{a-b}\text{ and } {\rm U}(a,b,z)\sim z^{-a}, \text{ as } z\rightarrow\infty.
\end{equation*}
Thus, we have $h_{l,21}(u)\rightarrow\infty$ and $h_{l,22}(u)\rightarrow0$ for $u\rightarrow\infty$.\\

Recalling that $s_{2}(u)$ is the stable solution, we have
\begin{align*}
s_{2}(u)=C\int_{u}^{\infty}&e^{-\mu v}(c+\varepsilon v)^{-\frac{1}{2}+\frac{\lambda}{\varepsilon}+\frac{1}{2}\sqrt{1+\frac{4\lambda}{\varepsilon}}}\\&
\cdot {\rm U}\left(1+\frac{\varepsilon+2\lambda+\varepsilon\sqrt{1+\frac{4\lambda}{\varepsilon}}}{2\varepsilon},1+\sqrt{1+\frac{4\lambda}{\varepsilon}},\frac{\mu(c+\varepsilon v)}{\varepsilon}\right)dv.
\end{align*}\\
Thus, as $u\rightarrow\infty$,
the ruin probability has the following asymptotics:
\begin{equation*}
\psi_{l,2}(u)\sim C\int_{u}^{\infty}e^{-\mu y}\cdot(c+\varepsilon y)^{-\frac{1}{2}+\frac{\lambda}{\varepsilon}+\frac{1}{2}\sqrt{1+\frac{4\lambda}{\varepsilon}}}\cdot{\left(\frac{\mu(c+\varepsilon y)}{\varepsilon}\right)}^{-1-\frac{\varepsilon+2\lambda+\varepsilon\sqrt{1+\frac{4\lambda}{\varepsilon}}}{2\varepsilon}}dy
\end{equation*}
equivalent to
\begin{equation*}
\psi_{l,2}(u)\sim C {\Big(\frac{\mu}{\varepsilon}\Big)}^{-1-\frac{\varepsilon+2\lambda+\varepsilon\sqrt{1+\frac{4\lambda}{\varepsilon}}}{2\varepsilon}}\int_{u}^{\infty}e^{-\mu y}\cdot(c+\varepsilon y)^{-2}dy,\quad \mbox{as $u\rightarrow\infty$.}
\end{equation*}
By \eqref{aslinearjeden} this asymptotic behaviour is the same as the one derived using the Theorem \ref{thm:1}.
Furthermore, one can simplify the above asymptotics by applying the integration-by-parts formula into
\begin{align}
\label{l2}
\nonumber \psi_{l,2}(u)&\sim C\cdot{\Big(\frac{\mu}{\varepsilon}\Big)}^{-1-\frac{\varepsilon+2\lambda+\varepsilon\sqrt{1+\frac{4\lambda}{\varepsilon}}}{2\varepsilon}}
\cdot\frac{1}{\varepsilon}\left(\frac{e^{-\mu u}}{c+\varepsilon u}-\mu\int_{u}^{\infty}\frac{e^{-\mu v}}{c+\varepsilon v}dv\right)\\
&=C\cdot{\left(\frac{\mu}{\varepsilon}\right)}^{-1-\frac{\varepsilon+2\lambda+\varepsilon\sqrt{1+\frac{4\lambda}{\varepsilon}}}{2\varepsilon}}
\cdot\frac{1}{\varepsilon}\left(\frac{e^{-\mu u}}{c+\varepsilon u}-\frac{\mu}{\varepsilon}e^{\frac{\mu}{\varepsilon}c}\Gamma\left(\frac{\mu}{\varepsilon}(c+\varepsilon u),0\right)\right),\quad \mbox{as $u\rightarrow\infty$.}
\end{align}\\

\section{Exp$(\lambda)$ distributed interarrival times with Erlang$(2,\mu)$ distributed claims sizes}
\label{experlang}
Let the claim sizes $(X_k)_{k\geqslant 0}$ be Erlang$(2,\mu)$ distributed and the interarrival times $(\tau_k)_{k\geqslant 0}$ be Exp$(\lambda)$ distributed, that is,
\begin{equation*}
f_X(x)=\mu^2xe^{-\mu x},\ x\geqslant0 \quad\text{and}\quad f_\tau(t)=\lambda e^{-\lambda t},\ t\geqslant0.
\end{equation*}
We denote by  $\psi_{l,3}(u)$ and $\Phi_{l,3}(u)$  the ruin probability and Gerber-Shiu function in this case.

\subsection{General premium}
Applying the same technique as in \citet{algebraic,exactasymptotic}, the boundary value problem (\ref{lodesd}) becomes
\begin{equation*}
\left[\left(\frac{d}{du}+\mu\right)^2\left(\delta-p(u)\frac{d}{du}+\lambda\right)-\lambda\mu^2\right]\Phi(u)_{l,3}=Gg(u),\quad u\geqslant0.
\end{equation*}
\\
For $\delta=0$ and $\omega=1$, $g(u)=0$ and the ODE of $\psi_{l,3}(u)$ has the following form
\begin{equation}
\label{er2}
\left[\left(\frac{d}{du}+\mu\right)^2\left(-p(u)\frac{d}{du}+\lambda\right)-\lambda\mu^2\right]\psi_{l,3}(u)=0,\quad u\geqslant0,
\end{equation}
equivalent to
\begin{equation}
	\left(\frac{d^2}{du^2}+2\mu\frac{d}{du}+\mu^2\right)(-p(u)\psi_{l,3}'(u)+\lambda\psi_{l,3}(u))=\lambda\mu^2\psi_{l,3}(u).
\end{equation}
Hence, one can rewrite it as
\begin{equation*}
p(u)\psi_{l,3}'''(u)+(2p'(u)+2\mu p(u)-\lambda)\psi_{l,3}''(u)+(p''(u)+2\mu p'(u)+\mu^2p(u)-2\mu\lambda)\psi_{l,3}'(u)=0.
\end{equation*}\\

Denoting $h_{l,3}(u)=\psi_{l,3}'(u)$, we have the following equation in $h_{l,3}(u)$
\begin{equation}\label{Main2}
p(u)h_{l,3}''(u)+(2p'(u)+2\mu p(u)-\lambda)h_{l,3}'(u)+(p''(u)+2\mu p'(u)+\mu^2p(u)-2\mu\lambda)h_{l,3}(u)=0.
\end{equation}

We first analyze general premium rate.
We denote now, as in \citep{Fed1993}
$$\tilde{q}_1(u) =\frac{2p'(u)+2\mu p(u)-\lambda}{p(u)}$$
and
$$\tilde{q}_0(u)=\frac{p''(u)+2\mu p'(u)+\mu^2p(u)-2\mu\lambda}{p(u)}.$$
Denote by
$\tilde{\rho}_{1,2}$
the roots of the quadratic equation
$$\rho^2+\frac{2\mu c-\lambda}{c}\rho+\frac{\mu^2c-2\mu\lambda}{c}=0,$$
that is, for $i=1,2$,
$$\tilde{\rho}_{i}=\frac{\lambda - 2\mu c\pm\sqrt{(\lambda - 2\mu c)^2+4\mu c(2\lambda-\mu c)}}{2c}.$$
Further, let
\[\tilde{\rho}_i^{(1)}(u)=\frac{-\tilde{\rho}_i'(u)}{2 \tilde{\rho}_i(u)+q_1(u)},\qquad i=1,2\]
with
\begin{equation}\label{rho1}
\tilde{\rho}_1(u)
=\frac{1}{2}\left(-\tilde{q}_1(u)-\sqrt{\tilde{q}_1^2(u)-4\tilde{q}_0(u)}\right)
\end{equation}
and
\begin{equation}\label{rho1}
\tilde{\rho}_2(u)
=\frac{1}{2}\left(-\tilde{q}_1(u)+\sqrt{\tilde{q}_1^2(u)-4\tilde{q}_0(u)}\right)
\end{equation}
being solutions of the characteristic equation
$\rho^2+q_1(u)\rho+q_0(u)=0$.

\begin{theorem}\label{thm:2}
Let $C_i$ ($i=1,2,3,4$) be some constants. If  \eqref{pierwszyprzypadek} holds
then
\begin{equation}\label{firstasc2}
\psi_{l,2}(u)\sim -\frac{C_1}{\tilde{\rho}_1}e^{\tilde{\rho}_1 u}
\end{equation}
where $\tilde{\rho}_{1}<0$ for
\begin{align}\label{safeloadc1}
\lambda>\frac{\mu c}{2}
\end{align}
and
\begin{equation}\label{firstas2cb}
\psi_{l,2}(u)\sim  -\frac{C_1}{\tilde{\rho}_1}e^{\tilde{\rho}_1 u}-\frac{C_2}{\tilde{\rho}_2}e^{\tilde{\rho}_2 u}
\end{equation}
where $\tilde{\rho}_{1,2}<0$ for
\begin{align}\label{safeloadc2}
\lambda<\frac{\mu c}{2}
\end{align}
Moreover, if (\ref{drugiprzypadek}) holds then
\begin{equation}\label{secondtasbc}
\psi_{l,2}(u)\sim C_3\int_u^\infty \exp\left\{\int_0^y (\tilde{\rho}_1(z)+ \tilde{\rho}_1^{(1)}(z)dz\right\}dy +
C_4\int_u^\infty \exp\left\{\int_0^y (\tilde{\rho}_2(z)+ \tilde{\rho}_2^{(1)}(z)dz\right\}dy.
\end{equation}
\end{theorem}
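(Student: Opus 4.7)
The plan is to mirror the proof of Theorem~\ref{thm:1}: reduce the asymptotics of $\psi_{l,2}$ to those of $h := \psi_{l,2}'$ via the second-order equation \eqref{Main2}, invoke Fedoryuk's approximation theorems with the tilded quantities $\tilde q_0, \tilde q_1, \tilde \rho_i$ in place of their untilded counterparts, and then recover $\psi_{l,2}$ from $h$ by integrating $\psi_{l,2}(u) = -\int_u^\infty h(z)\,dz$ using $\psi_{l,2}(\infty)=0$. The essential new feature compared with Theorem~\ref{thm:1} is that the limiting characteristic quadratic can have \emph{both} roots negative, which is precisely what gives rise to the two-term asymptotics \eqref{firstas2cb} and \eqref{secondtasbc}.

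For the constant case \eqref{pierwszyprzypadek}, I would first verify that $\tilde q_1(u) \to (2\mu c - \lambda)/c$ and $\tilde q_0(u) \to (\mu^2 c - 2\mu\lambda)/c$ with $O(1/u^2)$ corrections, so that Conditions 1) and 2) of \citet[p.~250]{Fed1993} apply. Vieta's formulas yield $\tilde\rho_1 + \tilde\rho_2 = (\lambda - 2\mu c)/c$ and $\tilde\rho_1 \tilde\rho_2 = \mu(\mu c - 2\lambda)/c$. When $\lambda > \mu c/2$ the product is negative, so exactly one root (call it $\tilde\rho_1$) is negative; Fedoryuk then produces $h(u) \sim C_1 e^{\tilde\rho_1 u}$, and integrating gives \eqref{firstasc2}. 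When $\lambda < \mu c/2$ both the sum and the product are negative, so both roots are negative; $h$ is then asymptotically a combination of $e^{\tilde\rho_1 u}$ and $e^{\tilde\rho_2 u}$, and term-by-term antidifferentiation delivers \eqref{firstas2cb}.

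For the polynomial case \eqref{drugiprzypadek} I would expand $\tilde q_1(u) = 2\mu + O(u^{-1})$ and $\tilde q_0(u) = \mu^2 + O(u^{-1})$, so that the limiting characteristic polynomial is $(\rho+\mu)^2$ and both $\tilde\rho_i(u) \to -\mu$. Verifying Conditions 1), 2') and (19) of \citet[p.~254]{Fed1993} --- which is where most of the technical effort concentrates, since the limiting roots coincide --- then yields two fundamental solutions of \eqref{Main2} with WKB-type asymptotics $\exp\{\int_0^y(\tilde\rho_i(z) + \tilde\rho_i^{(1)}(z))\,dz\}$. Because both branches inherit the $e^{-\mu y}$ factor, both integrals $\int_u^\infty(\cdot)\,dy$ converge, and antidifferentiation of $h$ delivers \eqref{secondtasbc}. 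The hardest step will be this double-root analysis in case P2: separating the two asymptotic branches via the subleading corrections of $\tilde q_0$ and $\tilde q_1$, and verifying that the two integrated contributions remain asymptotically independent --- in sharp contrast to Theorem~\ref{thm:1}, where the analogous second branch did not decay and only one integral produced a stable asymptotic.
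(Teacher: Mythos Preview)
Your treatment of the constant-type premium case \eqref{pierwszyprzypadek} matches the paper's: reduce to the second-order equation \eqref{Main2}, verify Conditions 1) and 2) of \citet[p.~250]{Fed1993}, and split according to the signs of the limiting roots $\tilde\rho_i$, then integrate. (One small slip: when $\lambda<\mu c/2$ the product $\tilde\rho_1\tilde\rho_2=\mu(\mu c-2\lambda)/c$ is \emph{positive}, not negative; it is the combination ``sum negative, product positive'' that forces both roots to be negative. Your conclusion is correct, only the stated reason is off.)

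For the polynomial premium case \eqref{drugiprzypadek} there is a genuine divergence from the paper. You propose to verify Conditions 1), 2$'$) and (19) of \citet[p.~254]{Fed1993}, i.e.\ the same page invoked in the proof of Theorem~\ref{thm:1}. But in Theorem~\ref{thm:1} the limiting roots were asymptotically distinct ($-\mu$ and $0$), whereas here both $\tilde\rho_i(u)\to-\mu$ and the limiting characteristic polynomial is the perfect square $(\rho+\mu)^2$. The paper explicitly states that ``we are not in the set-up of asymptotically simple roots'' and instead appeals to Conditions 1), 2) and equation (9) of \citet[p.~251--252]{Fed1993}. To make that route work, the paper verifies two ingredients you do not mention: that the difference $\tilde\rho_1(u)-\tilde\rho_2(u)+\tilde\rho_1^{(1)}(u)-\tilde\rho_2^{(1)}(u)$ eventually has constant sign, and that $q_1',q_1'',q_0',q_0''$ decay like $u^{-2}$ and $u^{-3}$ respectively. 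You correctly flag this double-root analysis as the hardest step, but the specific Fedoryuk conditions you plan to check are not the ones the paper uses, and the p.~254 framework is designed for asymptotically simple roots, so it is not clear it applies here without modification.
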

\begin{proof}
Note that in the first case of premium function \eqref{pierwszyprzypadek} we have
$$\tilde{q}_1(u)= \frac{2\mu c-\lambda}{c}  + O\left(\frac{1}{u^2}\right)$$
and
$$\tilde{q}_0(u)=\frac{\mu^2c-2\mu\lambda}{c}+ O\left(\frac{1}{u^2}\right).$$
Here
$\tilde{\rho}_1$
and $\tilde{\rho}_2$
are different. Then Conditions 1) and 2) of \citet[p. 250]{Fed1993} are satisfied
and hence choosing the stable solution (tending to zero as $u$ tends to infinity)
\begin{equation}\label{firstasc}
h_{l,2}(u)\sim e^{\tilde{\rho}_1 u}
\end{equation}
for $\tilde{\rho}_2>0$ or
\begin{equation}\label{firstascb}
h_{l,2}(u)\sim e^{\tilde{\rho}_1 u}+ e^{\tilde{\rho}_2 u}
\end{equation}
for $\tilde{\rho}_2<0$.
Similarly like in the proof of Theorem \ref{thm:1}, this observation completes the proof of
\eqref{firstasc2} and \eqref{firstas2cb}.\\

In the case of premium function (\ref{drugiprzypadek}) observe that
solutions of the characteristic equation
$\rho^2+q_1(u)\rho+q_0(u)=0$ converge for $u\rightarrow\infty$ to
\begin{equation}\label{rho1c}
\tilde{\rho}_1(u)
=\frac{1}{2}\left(-\tilde{q}_1(u)-\sqrt{\tilde{q}_1^2(u)-4\tilde{q}_0(u)}\right)\rightarrow -\mu
\end{equation}
and
\begin{equation}\label{rho1bc}
\tilde{\rho}_2(u)
=\frac{1}{2}\left(-\tilde{q}_1(u)+\sqrt{\tilde{q}_1^2(u)-4\tilde{q}_0(u)}\right) \rightarrow -\mu,
\end{equation}
since in this case $q_0(u)\rightarrow \mu^2$ and
$q_1(u)\rightarrow 2\mu$.
Although we are not in the set-up of asymptotically simple roots equation (9) in \citet[p. 251]{Fed1993}
\citet[(9)]{Fed1993} still holds true.
Observe now that for large $u$,
$$\tilde{\rho}_1(u)-\tilde{\rho}_2(u)+\tilde{\rho}_1^{(1)}(u)-\tilde{\rho}_2^{(1)}(u)$$
does not change sign.
Finally,
$$q_1'(u)\sim \frac{-2l}{u^2},\qquad q_1''(u)\sim \frac{4l}{u^3}$$
and
$$q_0'(u)\sim \frac{-2\mu l}{u^2} ,\qquad q_0''(u)\sim \frac{4\mu l}{u^3}.$$
Thus Conditions 1), 2) and (9) of \citet[p. 251-252]{Fed1993} are satisfied
and, similarly like in the proof of Theorem \ref{thm:1},
we can conclude the proof of \eqref{secondtasbc}.
\end{proof}

\subsection{Linear premium}
Using the same method as in the previous case and considering linear premium $p(u)=c+\varepsilon u$, one has
\begin{equation}
\label{e2cl}
(c+\varepsilon u)h_{l,3}''(u)+(2\varepsilon+2\mu (c+\varepsilon u)-\lambda)h_{l,3}'(u)+(2\mu\varepsilon+\mu^2(c+\varepsilon u)-2\mu\lambda)h_{l,3}(u)=0.
\end{equation}

As in the previous section,
before we solve this equation explicitly and then using this solution to perform the asymptotic analysis, we will first
show how the asymptotic behaviour of $\psi_{l,3}$ can be derived from Theorem \ref{thm:2}. Note that
in this case, we denote
\begin{equation*}
\tilde{q}_1(u) =\frac{2\varepsilon-\lambda}{c+\varepsilon u}+2\mu \ {\rm and}\ \tilde{q}_0(u)=\frac{2\mu\varepsilon-2\mu\lambda}{c+\varepsilon u}+\mu^2.
\end{equation*}
Further, we have the discriminant
$$\tilde{q}_1^2(u)-4\tilde{q}_0(u)=\frac{(2\varepsilon-\lambda)^2}{(c+\varepsilon u)^2}+\frac{4\lambda\mu}{c+\varepsilon u}$$
and hence for $i=1,2$,
\begin{align*}
\tilde{\rho}_{i}(u)&=\frac{1}{2}\left(-\tilde{q}_1(u)\pm\sqrt{\tilde{q}_1^2(u)-4\tilde{q}_0(u)}\right)\\
&=-\mu-\frac{2\varepsilon-\lambda}{2(c+\varepsilon u)}\pm\frac{1}{2}\sqrt{\frac{(2\varepsilon-\lambda)^2}{(c+\varepsilon u)^2}+\frac{4\lambda\mu}{c+\varepsilon u}}.
\end{align*}
Thus
\begin{align*}
\frac{\tilde{\rho}'_{i}(u)}{\sqrt{\tilde{q}_1^2(u)-4\tilde{q}_0(u)}}&=\frac{\frac{1}{2}\left(-\tilde{q}_1(u)\pm\sqrt{\tilde{q}_1^2(u)-4\tilde{q}_0(u)}\right)'}{\sqrt{\frac{(2\varepsilon-\lambda)^2}{(c+\varepsilon u)^2}+\frac{4\lambda\mu}{c+\varepsilon u}}}\\
&=\frac{\varepsilon(2\varepsilon-\lambda)}{2(c+\varepsilon u)\sqrt{(2\varepsilon-\lambda)^2+4\lambda\mu(c+\varepsilon u)}}\pm\frac{-\frac{\varepsilon(2\varepsilon-\lambda)}{c+\varepsilon u}-2\varepsilon\lambda\mu}{2(2\varepsilon-\lambda)^2+8\lambda\mu(c+\varepsilon u)}\\
&\sim\mp\frac{\varepsilon}{4(c+\varepsilon u)},\quad \mbox{as $u\rightarrow\infty$}.
\end{align*}
Also
\begin{equation*}
\tilde{\rho}_{i}(u)+ \tilde{\rho}_{i}^{(1)}(u)=\tilde{\rho}_{i}(u)\mp\frac{\tilde{\rho}'_{i}(u)}{\sqrt{\tilde{q}_1^2(u)-4\tilde{q}_0(u)}}\sim-\mu+
\left(-\frac{3\varepsilon}{4}+\frac{\lambda}{2}\right)\frac{1}{c+\varepsilon u}\pm\sqrt{\frac{\lambda\mu}{c+\varepsilon u}},\quad \mbox{as $u\rightarrow\infty$}.
\end{equation*}
We can then conclude that
\begin{equation*}
\exp\left\{\int_0^y (\tilde{\rho}_i(z)+ \tilde{\rho}_i^{(1)}(z)dz\right\}=e^{-\mu y\pm\frac{2}{\varepsilon}\left(\sqrt{\lambda\mu(c+\varepsilon y)}-\sqrt{\lambda\mu c}\right)}\left(\frac{c+\varepsilon y}{c}\right)^{-\frac{3}{4}+\frac{\lambda}{2\varepsilon}}
\end{equation*}
and by Theorem \ref{thm:2} we have that
\begin{align}
\psi_{l,3}(u)\sim&
C_3\int_u^\infty e^{-\mu y+\frac{2}{\varepsilon}\left(\sqrt{\lambda\mu(c+\varepsilon y)}-\sqrt{\lambda\mu c}\right)}\left(\frac{c+\varepsilon y}{c}\right)^{-\frac{3}{4}+\frac{\lambda}{2\varepsilon}}dy\nonumber\\
&+
C_4\int_u^\infty e^{-\mu y-\frac{2}{\varepsilon}\left(\sqrt{\lambda\mu(c+\varepsilon y)}-\sqrt{\lambda\mu c}\right)}\left(\frac{c+\varepsilon y}{c}\right)^{-\frac{3}{4}+\frac{\lambda}{2\varepsilon}}dy\label{asympdwa}
\end{align}
for some constants $C_3$ and $C_4$ as $u\rightarrow \infty$.\\

The same asymptotic behaviour can be observed by first
solving ODE \eqref{e2cl} explicitly. Note that for a general Bessel equation described by \citet[p. 211]{sherwood1939applied} is
\begin{equation*}
x^2\frac{d^2y}{dx^2}+\left[(1-2m)x-2\alpha x^2\right]\frac{dy}{dx}+\left[p^2a^2x^{2p}+\alpha^2x^2+\alpha(2m-1)x+m^2-p^2n^2\right]=0,
\end{equation*}
and the solution involves Bessel functions (see \citet[p. 211]{sherwood1939applied} and \citet[p. 460]{logan2012environmental} for details).\\

For our ODE \eqref{e2cl}, let
\begin{equation*}
x=c+\varepsilon u,\; m=-\frac{1}{2}+\frac{\lambda}{2\varepsilon},\; \alpha=-\frac{\mu}{\varepsilon},\; p=\frac{1}{2},\; p^2a^2=-\frac{\lambda\mu}{\varepsilon^2},\; n=-1+\frac{\lambda}{\varepsilon},
\end{equation*}
and employing the property ${\rm K}_{-v}(z)={\rm K}_{v}(z)$ (see \citet[p. 375]{ashandbook}), it can be verified that the corresponding solution is
\begin{align*}
h_{l,3}(u)=&C_{31}e^{-\frac{\mu}{\varepsilon}(c+\varepsilon v)}\cdot(c+\varepsilon v)^{-\frac{1}{2}+\frac{\lambda}{2\varepsilon}}\cdot {\rm BesselI}\Big[-1+\frac{\lambda}{\varepsilon},2\sqrt{\frac{(v+\frac{c}{\varepsilon})\lambda\mu}{\varepsilon}}\Big]\\
&+C_{32}e^{-\frac{\mu}{\varepsilon}(c+\varepsilon v)}\cdot(c+\varepsilon v)^{-\frac{1}{2}+\frac{\lambda}{2\varepsilon}}\cdot {\rm BesselK}\Big[-1+\frac{\lambda}{\varepsilon},2\sqrt{\frac{(u+\frac{c}{\varepsilon})\lambda\mu}{\varepsilon}}\Big],
\end{align*}
where $C_{31}$ and $C_{32}$ are some constants and BesselI and BesselK are modified Bessel functions. In this case $n=-1+\frac{\lambda}{\varepsilon}$ has to be restricted to an integer. This yields
\begin{align*}
s_{31}(u)&=C_{31}\int_{u}^{\infty}e^{-\frac{\mu}{\varepsilon}(c+\varepsilon v)}\cdot(c+\varepsilon v)^{-\frac{1}{2}+\frac{\lambda}{2\varepsilon}}\cdot {\rm BesselI}\Big[-1+\frac{\lambda}{\varepsilon},2\sqrt{\frac{(v+\frac{c}{\varepsilon})\lambda\mu}{\varepsilon}}\Big]dv,\\
s_{32}(u)&=C_{32}\int_{u}^{\infty}e^{-\frac{\mu}{\varepsilon}(c+\varepsilon v)}\cdot(c+\varepsilon v)^{-\frac{1}{2}+\frac{\lambda}{2\varepsilon}}\cdot {\rm BesselK}\Big[-1+\frac{\lambda}{\varepsilon},2\sqrt{\frac{(v+\frac{c}{\varepsilon})\lambda\mu}{\varepsilon}}\Big]dv.
\end{align*}
\\
Since
\begin{equation*}
{\rm I}_v(z)\sim \frac{e^z}{\sqrt{2\pi z}}\text{ and }{\rm K}_v(z)\sim \sqrt{\frac{\pi}{2z}}e^{-z}\quad\text{as }z\rightarrow\infty
\end{equation*}
(see \citet[p. 377]{ashandbook}), we have that, for $u\rightarrow\infty$,
\begin{align*}
\psi_{l,3}(u)&\sim\frac{C_{31}}{\sqrt{2\pi\sqrt{\lambda\mu}}}e^{-\frac{\mu c}{\varepsilon}}\int_{u}^{\infty}e^{-\mu y+\frac{2}{\varepsilon}\sqrt{\lambda\mu(c+\varepsilon y)}}\cdot(c+\varepsilon y)^{-\frac{3}{4}+\frac{\lambda}{2\varepsilon}}dy\\
&+C_{32}\sqrt{\frac{\pi}{2\sqrt{\lambda\mu}}}e^{-\frac{\mu c}{\varepsilon}}\int_{u}^{\infty}e^{-\mu y-\frac{2}{\varepsilon}\sqrt{\lambda\mu(c+\varepsilon y)}}\cdot(c+\varepsilon y)^{-\frac{3}{4}+\frac{\lambda}{2\varepsilon}}dy
\end{align*}
which is consistent with \eqref{asympdwa} and hence with Theorem \ref{thm:2}.

\section{Asymptotic Analysis - comparison results}
\label{asymptotic}
\subsection{Exp$(\lambda)$ distributed interarrival times with Exp$(\mu)$ distributed claims sizes}
In this case, for the linear premium we have the asymptotic result (\ref{asymptotic1}), that is,
\begin{equation*}
\psi(u)\sim\frac{\mu}{\lambda c^{\lambda/\varepsilon}}Ce^{-\mu u}(c+\varepsilon u)^{\frac{\lambda}{\varepsilon}-1},
\end{equation*}
where $C$ is some constant. For a constant premium $c$, the result for the ruin probability is
\begin{equation*}
\psi_{c,1}(u)=\frac{\lambda}{c\mu}e^{-(\mu-\frac{\lambda}{c})u},\quad\mbox{for any $u\geqslant0$}.
\end{equation*}
Thus, we have
\begin{equation*}
\psi_{l,1}(u)\sim C\cdot\psi_{c,2}(u)\cdot e^{-\frac{\lambda}{c}u}\cdot(c+\varepsilon u)^{\frac{\lambda}{\varepsilon}-1},\quad \mbox{as $u\rightarrow\infty$}.
\end{equation*}

\subsection{Erlang$(2,\lambda)$ distributed interarrival times with Exp$(\mu)$ distributed claims sizes}
Recall the asymptotic result (\ref{l2}) for risk models with linear premiums in this case
\begin{align*}
\psi_{l,2}(u)&\sim
C_1\cdot{\Big(\frac{\mu}{\varepsilon}\Big)}^{-1-\frac{\varepsilon+2\lambda+\varepsilon\sqrt{1+\frac{4\lambda}{\varepsilon}}}{2\varepsilon}}
\cdot\frac{1}{\varepsilon}\Big(\frac{e^{-\mu u}}{c+\varepsilon u}-\frac{\mu}{\varepsilon}e^{\frac{\mu}{\varepsilon}c}\Gamma\left(\frac{\mu}{\varepsilon}(c+\varepsilon u),0\right)\Big),\quad \mbox{as $u\rightarrow\infty$}
\end{align*}\\
and the explicit result for risk models with constant premiums gives
\begin{equation*}
\psi_{c,2}(u)=C_{2}e^{-\frac{c\mu-2\lambda+\sqrt{c^2\mu^2+4c\lambda\mu}}{2c} u},\quad u\geqslant0
\end{equation*}
(see \citet{dickson1998ruin,dicksonhipperlang2}). Taking the limit and applying L'H$\hat{\rm o}$pital's rule, the ratio between $\psi_{l,2}(u)$ and $\psi_{c,2}(u)$
behaves asymptotically as
\begin{equation*}
\frac{\psi_{l,2}(u)}{\psi_{c,2}(u)}\sim C_3 e^{-\frac{c\mu+2\lambda-\sqrt{c^2\mu^2+4c\lambda\mu}}{2c} u}(c+\varepsilon v)^{-2},\quad u\rightarrow\infty
\end{equation*}
where $C_3$ is some constant. Hence $\frac{\psi_{l,2}(u)}{\psi_{c,2}(u)}$
tends to zero as $u$ tends to infinity.\\

This means that as the initial surplus $u$ increases, one has more premium income for risk models with linear premiums, thus the ruin probability $\psi_{l,2}(u)$ for risk models with linear premiums decays to zero exponentially faster than
the ruin probability $\psi_{c,2}(u)$ for constant premiums risk models. As expected, this means that risk models with linear premiums are less risky than the constant premiums ones.

\subsection{Exp$(\lambda)$ distributed interarrival times with Erlang$(2,\mu)$ distributed claims sizes}
We start from recalling the asymptotic result (\ref{asympdwa}) for risk models with linear premiums
\begin{align*}
\psi_{l,3}(u)\sim&
C_1\int_u^\infty e^{-\mu y+\frac{2}{\varepsilon}\left(\sqrt{\lambda\mu(c+\varepsilon y)}-\sqrt{\lambda\mu c}\right)}\left(\frac{c+\varepsilon y}{c}\right)^{-\frac{3}{4}+\frac{\lambda}{2\varepsilon}}dy\nonumber\\
&+
C_2\int_u^\infty e^{-\mu y-\frac{2}{\varepsilon}\left(\sqrt{\lambda\mu(c+\varepsilon y)}-\sqrt{\lambda\mu c}\right)}\left(\frac{c+\varepsilon y}{c}\right)^{-\frac{3}{4}+\frac{\lambda}{2\varepsilon}}dy,\quad u\rightarrow\infty
\label{asympdwa2}
\end{align*}
where $C_1$ and $C_2$ are constants.
%
The explicit result for constant premiums case can also be derived from ordinary differential equation as
\begin{equation*}
\psi_{c,3}(u)=C_{3}e^{\sigma_{31}u}+C_{4}e^{\sigma_{32}u},\quad u\geqslant0,
\end{equation*}
where $\sigma_{31,32}=-\frac{2c\mu-\lambda\pm\sqrt{\lambda^2+4c\lambda\mu}}{2c}<0$ and $C_{3}$ and $C_{4}$ are some constants, see \cite{ligarridoerlangn,bergel2015further} for details.\\

Again, taking the limit and applying L'H$\hat{\rm o}$pital's rule, we can conclude that
\begin{equation*}
\lim\limits_{u\rightarrow\infty}\frac{\psi_{l,3}(u)}{\psi_{c,3}(u)}
=0.
\end{equation*}
Thus, as the initial surplus $u$ increases, the ruin probability $\psi_{l,3}(u)$ for risk models with linear premiums decreases to zero faster than
the ruin probability $\psi_{c,3}(u)$ for constant premiums. Again,  this means that risk models with constant premiums are more risky than linear premiums ones, as expected,
thus there is gain in terms of solvency when binding premium to reserves.

\section{Conclusion}
\label{conclusion}
It is much easier to calculate the ruin probabilities for risk models with constant premiums, and explicit results for constant cases abound in risk theory literature, but the risk models with surplus-dependent premiums are more applicable in real life. For these complex cases, we have results in terms of confluent hypergeometric function and modified Bessel function at most, or only asymptotic results, from which one can make inferences.

\bibliographystyle{chicago}
\bibliography{bib}

\end{document}